\newtheorem {thm}{{Theorem}}[section]
\newtheorem{lem}[thm]{Lemma}
\newtheorem{defn}[thm]{Definition}
\newtheorem{cor}[thm]{Corollary}
\keywords{multi-tower decomposition, tiling dynamical systems}
\subjclass[2000]{37A15,37B50}
\begin{document}
\title[$\mathbb Z^d$ Rectangle Multi-tower Theorem]{The $\mathbb Z^d$ Alpern multi-tower theorem for
  rectangles:  a tiling approach}
\author{Ay\c se A. \c Sah\.in}
\address{Department of Mathematical Sciences\\ DePaul University\\ 2320
North Kenmore Ave.\\ Chicago, IL 60614}
\email{asahin@condor.depaul.edu}
\thanks{This research was supported in part by a DePaul University Research Council Paid Leave.}
\maketitle

\begin{abstract}We provide a proof of the Alpern multi-tower theorem for $\mathbb Z^d$ actions.  We reformulate the theorem as a problem of measurably tiling orbits of a $\mathbb Z^d$ action by a collection of rectangles whose corresponding sides have no non-trivial common divisors.  We associate to such a collection of rectangles a special family of generalized domino tilings.  We then identify an intrinsic dynamic property of these tilings, viewed as symbolic dynamical systems, which allows for a multi-tower decomposition.
\end{abstract}

\section{Introduction}
The Rohlin Lemma is one of the fundamental results of ergodic
theory. In \cite{AlpRL} Alpern proved a generalization of the Rohlin
Lemma; he showed that given a free, measure preserving
transformation $T$ of a Lebesgue space $X$, one can decompose $X$
into $k$ towers provided that $k\ge 2$ and the heights of the towers
do not have a non-trivial common divisor. He also showed that
each tower can be made to take up any proportion of the space.  A
later, and shorter, proof was given in \cite{EP}. Alpern's
Multi-tower Theorem has also played an important role in the proof
of many recent important results in ergodic theory (see \cite{AlpRL}
for the original application; see \cite{Korn} and \cite{EP} for a
survey of more recent applications) and has been the subject of some
recent research activity (see for example \cite{BDM} and
\cite{EHP}).

The Rohlin Lemma for $\mathbb Z^d$ actions for towers with
rectangular shape was first shown by Conze \cite{C} and Katznelson
and Weiss \cite{KW}.  Their results were subsumed by the later work
of Ornstein and Weiss \cite{OW} who have generalized the Rohlin
Lemma to free actions of countable amenable groups.  It follows from
their work that for finite subsets $R$ of $\mathbb Z^d$ there is a
Rohlin Lemma with towers of shape $R$ if and only if $R$ tiles
$\mathbb Z^d$.

The Alpern Multi-tower Theorem was generalized to $\mathbb Z^d$ for rectangular
towers whose corresponding dimensions do not have a non-trivial
common divisor by Prikhodko \cite{Prikh} in 1999.  In this paper we give a different proof of the same result obtained independently by the author in May, 2005.  The author gave multiple presentations of the argument in this paper before learning from V. S. Prasad about Prikhodko's article in October, 2005.

The approach given here can be viewed as playing the role
of the \lq\lq simple" proof of the higher dimensional result
analogous to \cite{EP}.  In particular, establishing the desired
distribution of the towers is an easy part of our argument, whereas
it constitutes the bulk of the work in \cite{Prikh}.

The more significant difference in our approach lies in that we
identify an intrinsic and dynamic property of a family of tilings
which allows for a multi-tower decomposition with those tile shapes.
This formulation provides a fruitful direction for investigating a
more general multi-tower theorem.

More specifically, the $\mathbb Z^d$ Alpern Multi-tower Theorem for rectangles
can be reformulated as a problem of tiling orbits of an action.  It
states that for a.e. $x\in X$, the orbit of $x$ under the action $T$
can be measurably tiled by $k\ge 2$ rectangles whose corresponding
dimensions have do not have a common divisor. Further, these tilings
can be constructed so that each rectangular tile has a previously prescribed probability distribution.  Tilings of $\mathbb Z^d$ by rectangles can be viewed
as tilings of $\mathbb R^d$ by dominos where each domino has integer
dimensions, and whose vertices lie on the integer lattice.  In the
proof we present here we associate to each collection of required
tower shapes a domino tiling of $\mathbb R^d$ and we translate the
problem of decomposing the space into multiple towers into the
problem of finding a factor of the given system which is an
invariant measure on the associated domino tiling system.  This
point of view allows us to identify a fairly innocuous mixing property of
the tiling system (having a {\it uniform filling set}) as the key
ingredient necessary to construct such a factor.

There are many other well studied and interesting tilings which have
this property including lozenge tilings and square ice, (see
\cite{RS4}) and are a natural place to start investigating the
possibility of a general multi tower theorem. Namely, given a collection
of shapes which tile $\mathbb Z^d$ can one establish necessary and
sufficient conditions for there to be an Alpern Theorem with towers of
these shapes?

We establish some notation to state the multi-tower theorem more formally.  For
$\vec w=(w_1,\cdots,w_d)\in\mathbb N^d$ we set $R_{\vec
w}=\prod_{i=1}^d [0,w_i-1]\cap\mathbb Z$.   We call $R_{\vec w}$ a
rectangle in $\mathbb Z^d$.  Given a $\mathbb Z^d$ action $T$ on a
space $X$, a subset $\tau=\cup_{\vec v\in R_{\vec w}}T^{\vec v}F$ of $X$ is called a {\it Rohlin tower of shape
$R_{\vec w}$} with $\vec w\in\mathbb N^d$ if there exists a set
$F\in\mathcal M$ with the property that $T^{\vec v}F\cap T^{\vec
u}F=\emptyset$ for all $\vec v,\vec u\in R_{\vec w}$.  We call the set
$F$ the {\it base of the tower}.

\begin{thm}\label{main}{\em (The $\mathbb Z^d$ Alpern Multi-tower Theorem for
rectangles)} Let $\vec w^1,\vec w^2,\cdots\in\mathbb N^d$, and
$p_1,p_2,\cdots\in\mathbb R^+$ satisfy
\begin{equation}\label{relpr}
\text{for all } i=1,\cdots,d\qquad  \gcd(w^1_i,w^2_i,\cdots)=1
\qquad\text{and}
\end{equation}
\begin{equation}\label{givdistr}
\sum_{j=1}^\infty p_j = 1.
\end{equation}

Then given any free and measure preserving $\mathbb Z^d$ action $T$
of a Lebesgue probability space $(X,\mathcal M,\mu)$ for all $j\ge
1$, there are Rohlin towers $\tau^j$ of shape $R_{\vec w^j}$ with
\begin{gather}
\mu (\tau^j)=p_j, \label{distr}\\
\tau^j\cap\tau^{j'}=\emptyset\text{ if } j\neq j',\label{disjnt}\\
\text{and }  \cup_{j=1}^\infty \tau^{j}=X.\label{noslush}
\end{gather}
\end{thm}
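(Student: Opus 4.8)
The plan is to reformulate the multi-tower decomposition as an equivariant measurable map from $(X,\mu,T)$ into a symbolic system of tilings of $\mathbb Z^d$, to put on that system an invariant measure with the marginals dictated by the $p_j$, and then to realize that measure as a factor of $T$ by exploiting a mixing (``uniform filling'') property of the tiling system, which is where hypothesis (\ref{relpr}) enters. For the reformulation, let $\Omega$ be the set of tilings of $\mathbb Z^d$ by translates of the rectangles $R_{\vec w^j}$, $j\ge1$, where a point $\omega\in\Omega$ records, for each $\vec v\in\mathbb Z^d$, the index $j$ of the tile of $\omega$ containing $\vec v$ together with the position of $\vec v$ inside that tile; with $\mathbb Z^d$ acting by translation this is a shift-invariant space. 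A family of Rohlin towers satisfying (\ref{disjnt}) and (\ref{noslush}) is precisely the data of an equivariant measurable map $\phi\colon X\to\Omega$: given $\phi$, set $F_j=\phi^{-1}(\{\omega: \vec0 \text{ is the minimal corner of a type }j \text{ tile of }\omega\})$ and $\tau^j=\bigcup_{\vec v\in R_{\vec w^j}}T^{\vec v}F_j$. The tiling property makes $\{T^{\vec v}F_j: \vec v\in R_{\vec w^j},\ j\ge1\}$ a partition of $X$, which gives (\ref{disjnt}), (\ref{noslush}), and the within-tower disjointness needed for $\tau^j$ to be a genuine tower; freeness of $T$ guarantees the pieces are honestly distinct. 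Since $\mu(\tau^j)=(\prod_{i=1}^d w^j_i)\,\mu(F_j)$ and $\mu(F_j)=\phi_*\mu(\{\vec 0 \text{ a minimal corner of type }j\})$, the theorem reduces to producing an equivariant measurable $\phi$ for which $\nu:=\phi_*\mu$ assigns mass $p_j/\prod_{i=1}^d w^j_i$ to that cylinder for every $j$.

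Second, I would construct the target measure. One can produce a shift-invariant Borel probability measure $\nu$ on $\Omega$ with exactly those cylinder masses by an elementary construction, for instance as a weak-$*$ limit of uniform measures on periodic tilings of larger and larger fundamental domains, the domains chosen so that tile type $j$ occurs with the prescribed density (rational $p_j$ handled directly, the general case by approximation). A reduction to a finite sub-collection of the shapes, convenient so that $\Omega$ becomes a compact shift of finite type, is harmless, and the infinite case follows from it by the same limiting scheme as in the one-dimensional argument of \cite{EP}. The marginal constraints are automatically mutually consistent because every site of a tiling lies in exactly one tile, which is the combinatorial shadow of (\ref{givdistr}). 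It is here, in the freedom of choosing tile proportions, that the required distribution is dialed in; consistent with the comparison to \cite{Prikh} made in the introduction, this is the easy part.

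The heart of the proof is realizing $\nu$ as a measurable factor of an arbitrary free measure preserving $\mathbb Z^d$ action $T$ of $(X,\mathcal M,\mu)$. The mechanism is that the relevant shift-invariant space of tilings has a \emph{uniform filling set}: a finite box such that a partial tiling of any finite region extends to a global tiling, and partial tilings of two regions separated by at least the diameter of the box extend simultaneously. Hypothesis (\ref{relpr}) is exactly what makes this possible: a common factor $g>1$ among the $w^j_i$ would rigidly constrain the corner coordinates in direction $i$ modulo $g$, which is simultaneously the classical obstruction to an Alpern decomposition and the failure of uniform filling. Granting uniform filling, I would run a Rohlin-tower approximation: using the $\mathbb Z^d$ Rohlin lemma for rectangular towers (Conze \cite{C}, Katznelson--Weiss \cite{KW}, Ornstein--Weiss \cite{OW}) take towers of shapes $R_{\vec N^{(n)}}$ with $\vec N^{(n)}\to\infty$ exhausting $X$; on the base of each tower lay a prescribed-proportion tiling patch of $R_{\vec N^{(n)}}$, propagate it equivariantly up the tower, and use the uniform filling set to fill a bounded-width buffer near the tower walls and to make the $(n{+}1)$st construction agree with the $n$th off a set of small measure. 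This yields a Cauchy sequence of equivariant measurable maps $\phi_n$ converging a.e. to an equivariant $\phi\colon X\to\Omega$; since the buffers have density tending to $0$, $\phi(x)$ is a.e. a genuine tiling, and since the patches carry the right proportions, a mean-ergodic-theorem estimate shows $\phi_*\mu$ has the required cylinder masses. Setting $\tau^j=\bigcup_{\vec v\in R_{\vec w^j}}T^{\vec v}\phi^{-1}(\{\vec 0 \text{ a minimal corner of type }j\})$ then gives (\ref{distr})--(\ref{noslush}).

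I expect the main obstacle to be precisely the proof that the tiling system attached to a rectangle collection with relatively prime corresponding side-lengths has a uniform filling set. This is where the structure of rectangle tilings, equivalently tilings of $\mathbb R^d$ by integer-vertex dominoes, must be analyzed carefully, and where (\ref{relpr}) is genuinely needed so that the ``fill'' can always be completed across any gap. A secondary difficulty is the bookkeeping in the patching step, which must simultaneously drive the buffer density and the proportion error to zero while keeping the successive approximations $\phi_n$ compatible; but this is routine once the combinatorial flexibility of the second step and the uniform filling property of the third are in hand.
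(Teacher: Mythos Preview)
Your high-level strategy---reformulate the decomposition as an equivariant map into a symbolic tiling space, prove a uniform-filling/gluing property for that space using (\ref{relpr}), and build the map as a limit over a nested sequence of Rohlin towers---is exactly the paper's. What you are missing is the paper's central technical device: the auxiliary \emph{large domino} $\tau^P$ with side lengths $W_i=\prod_j w^j_i$. The paper does \emph{not} work in your space $\Omega$ of tilings by the $R_{\vec w^j}$ alone; it enlarges the tile set by $\tau^P$, takes as uniform filling set $Z$ the shift-orbit of the periodic ``good brick wall'' $y^P$ built entirely from $\tau^P$, and proves filling for that specific $Z$ (Theorem~\ref{ufs}). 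The proof uses that each $W_i$ is divisible by every $w^j_i$, so that after squaring off partial $\tau^P$'s on both sides of a collar, each face has length a multiple of every $w^j_i$ and the Frobenius fact coming from (\ref{relpr}) lets you fill the collar in columns. Your stronger statement---that arbitrary partial tilings by the $R_{\vec w^j}$ extend and can be glued across a bounded buffer---is not what is established, and is not obviously true without something playing the role of $\tau^P$.

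The large domino also replaces your entire second and third paragraphs on hitting the distribution. The paper never constructs a target measure on the tiling space and never lays patches with prescribed proportions. Instead the factor map is built so that almost all of $X$ sits under $\tau^P$ (only the thin filling collars carry small tiles), giving $\mu(\phi^{-1}(\tau^j))<\min_i p_i$ for every $j$; afterwards one simply partitions the base of the $\tau^P$-tower into pieces of the right measures and, using $w^j_i\mid W_i$, cuts each piece of the $R_{\vec W}$-tower into disjoint $R_{\vec w^j}$-towers. That is the whole distribution argument, and it is why the paper can say this part is easy. Your plan to control proportions via carefully chosen patches plus a mean-ergodic estimate would work in principle, but it is both harder and unnecessary once $\tau^P$ is on the table; and for the countably-infinite case the paper iterates the same idea with a growing sequence of large dominoes $\tau^{P_k}$ rather than by the limiting scheme you sketch.
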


In the case where $d=1$ Theorem \ref{main} is Alpern's original result.
The numbers $w^1,w^2,\cdots$ represent the heights of the
towers and condition (\ref{relpr}) states that their greatest common
divisor is $1$.  This is clearly a necessary assumption since a
non-trivial common divisor of the heights of the towers implies
non-ergodicity of some power of the transformation $T$.  A similar
argument can be made in the higher dimensional case, where a
non-trivial common divisor of corresponding dimensions $\vec e_i$
implies the non-ergodicity of the group element $T^{\vec e_i}$,
where we let $\vec e_1,\cdots,\vec e_d$ denote the standard basis of
$\mathbb Z^d$. It is worth noting that the higher dimensional result
does not require additional constraints on the relationship between
the dimensions of the towers in directions $\vec e_i$ and $\vec e_j$
if $i\neq j$. Namely condition (\ref{relpr}) is a sufficient
condition for all dimensions $d\ge 1$.

The organization of the paper is as follows.  We begin by
considering Theorem~\ref{main} in the case of finitely many rectangles.
This case allows us to highlight the connection to symbolic tiling
spaces, the mixing condition on these systems necessary to prove the
result, and the ease with which we can guarantee the correct
distribution of tiles. Section~\ref{finite} introduces the tiling
spaces and contains the reduction of the proof of Theorem~\ref{main}
to finding an invariant measure on the tiling system for the case of
finitely many rectangles (Theorem~\ref{real}).
Section~\ref{reduction} contains necessary definitions and results
related to mixing properties of shifts of finite type.
Section~\ref{finitephi} contains the proof of Theorem~\ref{real} and
Section ~\ref{phi} contains the proof of Theorem~\ref{main} in the
case of countably infinite rectangles.

The author wishes to thank the Department of Mathematics at George Washington University for their hospitality while most of this work was being completed.  The author also wishes to thank V. S. Prasad for bringing Prikhodko's work to our attention and for fruitful conversations about that work.

\section{Theorem~\ref{main} in the finite case and generalized domino tiling shift spaces}\label{finite}

We begin by defining generalized domino tiles and their associated symbolic
systems.  Fix $k\in\mathbb N$, $k\ge 2$ and vectors $\vec
w^1,\cdots,\vec w^k\in \mathbb N^d$ satisfying the conditions of
Theorem~\ref{main}. Define the vector $\vec W\in\mathbb N^d$ with
$W_i=\prod_{j=1}^k w_i^j$. We define $k+1$ domino tiles:  the
dominos
\begin{equation*}
\tau^j=\prod_{i=1}^d[0,w_i^j-1)
\end{equation*}
for $j=1,\cdots k$ and an additional domino
\begin{equation*}
\tau^P=\prod_{i=1}^d [0,W_i-1)
\end{equation*}
which we will refer to as the {\em large domino}.

Consider all tilings of $\mathbb R^d$ with these dominos subject to
the condition that the vertices of the dominos lie on the integer
lattice.  We refer to such tilings as {\em ($\vec w^1,\cdots, \vec
w^k$) domino tilings}. These tilings can easily be coded into a
one-step $\mathbb Z^d$ shift of finite type $Y(\vec w^1,\cdots,\vec
w^k)$ which we call the $\vec w^1,\cdots,\vec w^k$ {\em domino
tiling shift}. We will be working extensively with this shift and
thus we will give some details of one such coding to establish
necessary notation and terminology.

Define the alphabet of the shift by
$\mathcal A=\{\tau^j_{\vec v}:1\le j\le k, \vec v\in R_{\vec
w^j}\}.$
The set of forbidden blocks of $Y$ are defined first to ensure
that for each $1\le j\le k$ the symbols $\tau^j_{\vec v}$ can be placed
together so that there is a block $\tau^j$ in $\mathcal A^{R_{\vec
w^j}}$ which is a symbolic representation of the domino $\tau^j$. In
the symbolic domino $\tau^j$ the symbol $\tau^j_{\vec 0}$ is taken
to be the origin and appears in the lower left hand vertex of the
block. Then for each $\vec v\in R_{\vec w^j}$, the symbol
$\tau^j_{\vec v}$ appears in position $\vec v$ relative to the
origin. We define the symbolic block corresponding to the domino
$\tau^P$ analogously. A two dimensional example of the alphabet of a
generalized domino tiling shift and the symbolic domino blocks are
shown in Figure \ref{Fi:alphabet}.

\begin{figure}[h!]
\scalebox{0.5}{\includegraphics{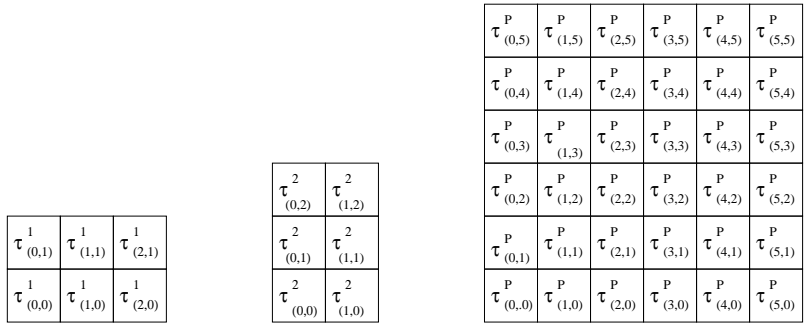}} \caption{The
alphabet and domino tiles of $Y(\vec w^1,\vec w^2)$ with $\vec
w^1=(3,2)$ and $\vec w^2=(2,3)$.} \label{Fi:alphabet}
\end{figure}

In addition, the forbidden blocks are defined so that the words in
$Y$ correspond to a tiling of $\mathbb R^d$ by the dominos
$\tau^j$ and $\tau^P$.
 For example, symbols of the form
$\tau^j_{(w_1^j-1,\cdots)}$ can only be followed horizontally by
symbols of the form $\tau^{j'}_{(0,\cdots)}$, and so forth.

For $y\in Y$ and $\vec v\in\mathbb Z^d$, let $y[\vec v]\in\mathcal
A$ denote the symbol occurring in the point $y$ at position $\vec
v$. We will prove the following result about domino tiling shifts.

\begin{thm}\label{real}
Fix $k\in\mathbb N, k\ge 2$.  Let $\vec w^1,\cdots,\vec w^k$, and $p_1,\cdots,p_k$ satisfy the conditions of
Theorem \ref{main}.  Set $Y=Y(\vec w^1,\cdots,\vec
w^k)$ and let $\mathcal A$ denote the alphabet of $Y$.

Then given any free, measure preserving $\mathbb Z^d$ action $T$
on $(X,\mathcal M,\mu)$ there is a partition
$\phi:X\rightarrow\mathcal A$ so that  for a.e. $x\in X$ there is
a point $y_x\in Y$ with
\begin{equation}\label{equiv}
\phi(T^{\vec v}(x))=y_x[\vec v]
\end{equation}
and for $j=1,\cdots,k$
\begin{equation}\label{msrs}
\mu(\phi^{-1}(\tau^j))<\min_{i=1,\cdots,k} p_i.
\end{equation}
\end{thm}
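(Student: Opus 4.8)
The plan is to obtain the partition $\phi$ by first building, inside the given $\mathbb{Z}^d$ system, a measurable ``skeleton'' of large dominos $\tau^P$ whose complement can then be filled by the small dominos $\tau^1,\dots,\tau^k$ in a prescribed proportion. Concretely, I would apply the Ornstein--Weiss Rohlin Lemma for $\mathbb{Z}^d$ actions (cited in the introduction) to the rectangle $R_{\vec W}$ — which does tile $\mathbb{Z}^d$ since it is itself a rectangle — to produce a tower of shape $R_{\vec W}$ whose base $F$ is chosen with $\mu(\tau^P\text{-tower})$ as close to $1$ as I like, so that the leftover ``slush'' set $S=X\setminus\bigcup_{\vec v\in R_{\vec W}}T^{\vec v}F$ has measure less than $\min_i p_i$. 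Setting $\phi$ on the $R_{\vec W}$-tower to read off the symbolic large-domino block $\tau^P$ gives, on that part of $X$, the local consistency condition~(\ref{equiv}) automatically, and it already forces $\mu(\phi^{-1}(\tau^j))$ to be supported inside $S$, hence $<\min_i p_i$, giving~(\ref{msrs}) for free once we finish.

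The remaining task is to define $\phi$ on the slush set $S$ so that the pattern seen along each orbit is globally a legal $(\vec w^1,\dots,\vec w^k)$ domino tiling. Here I would use the key structural fact, built into the construction of $\vec W$ and the condition~(\ref{relpr}): since $\gcd(w_i^1,\dots,w_i^k)=1$ for each coordinate $i$, one can tile $\mathbb{Z}^d$ (indeed any sufficiently large rectangular region, and the half-spaces and slabs that arise as gaps between translates of the $R_{\vec W}$-skeleton) by the small dominos $\tau^1,\dots,\tau^k$; this is essentially the classical $d=1$ fact that intervals of coprime lengths tile $\mathbb{Z}_{\geq 0}$, applied coordinate by coordinate and then producted up. Combinatorially filling the complement of the skeleton in $\mathbb{Z}^d$ by the $\tau^j$'s in a chosen-in-advance local rule, and then transporting that rule measurably to $X$ using the coding already defined on the skeleton, yields a point $y_x\in Y$ with $\phi(T^{\vec v}x)=y_x[\vec v]$ for a.e.\ $x$. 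One subtlety is that the Rohlin tower is not a clean periodic array, so the ``gaps'' between skeleton blocks are irregular; I would handle this by noting that the slush has small measure and, if necessary, iterating the Rohlin Lemma on $S$ itself (or using a finer tower inside $S$) so that the gaps are always wide enough in every coordinate direction to admit a $\tau^j$-tiling — coprimality guarantees a uniform width threshold beyond which all integers, hence all admissible gap-lengths, are representable.

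The main obstacle I anticipate is precisely this gap-filling step: ensuring that the (measurably varying, a priori arbitrarily shaped) complement of the large-domino skeleton along each orbit can be consistently and measurably tiled by the small dominos, rather than merely tiled abstractly for each fixed orbit. The clean way around it is to not try to tile an arbitrary complement, but to engineer the skeleton so its complement is a disjoint union of rectangular slabs of controlled dimensions — e.g.\ by taking $F$ to be the base of a tower of a \emph{larger} rectangle $R_{\vec W'}$ with $\vec W' = \vec W + (\text{large multiple of }\vec 1)$ wait — more carefully, by choosing the Rohlin tower shape and the placement so that between consecutive skeleton copies there are exactly rectangular ``buffer'' regions each of which is individually tileable by the $\tau^j$. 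Since rectangles of coprime side lengths tile, and since any long enough rectangle in one coordinate splits as a product tiling, such buffers can be pre-designed; the proportion of $X$ devoted to buffers (hence to the symbols $\tau^j$) can be made less than $\min_i p_i$, so~(\ref{msrs}) holds, and~(\ref{equiv}) is then immediate from the purely local, orbit-by-orbit consistency of the coding. This reduces Theorem~\ref{real} to a finite combinatorial tiling lemma plus one application of the $\mathbb{Z}^d$ Rohlin Lemma, with the precise distribution $p_j$ to be adjusted afterward using the freedom in the subsequent sections.
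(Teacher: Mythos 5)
There is a genuine gap, and it sits exactly at the step you flag as a ``subtlety'': filling the complement of the $R_{\vec W}$-skeleton. A single application of the $\mathbb Z^d$ Rohlin Lemma gives you no control whatsoever over the relative positions of the occurrences of the tower along an orbit: the gaps between skeleton copies are not rectangular slabs, and their widths can be arbitrary --- in particular smaller than $\min_j w^j_i$ in some coordinate, or of a size not representable as a nonnegative integer combination of the $w^j_i$'s (coprimality only makes \emph{sufficiently large} integers representable). So the complement along an orbit need not be tileable by $\tau^1,\dots,\tau^k$ at all, and your proposed fixes do not repair this: ``engineering the skeleton so its complement is a union of rectangular buffers'' is precisely what the Rohlin Lemma cannot deliver, and iterating the Rohlin Lemma on the slush set $S$ changes nothing about the geometry of the gaps inside orbits. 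Note also that (\ref{equiv}) demands that for a.e.\ $x$ the \emph{entire} orbit pattern be a legal point of $Y$, with no exceptional set --- this is the whole content of the theorem (it is what distinguishes it from the ordinary Rohlin Lemma, where an error set is allowed), so it cannot be obtained ``for free'' from local consistency on one tower plus ad hoc filling of an uncontrolled remainder.

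The paper's proof is built to overcome exactly this obstruction, and it is genuinely multiscale: one takes Rohlin towers of shapes $R_{n_i}$ with $n_i\to\infty$ and defines $\phi$ as a limit of maps $\phi_i$. At stage $i+1$ the interior of the new tower is painted with the periodic ``good brick wall'' of the large domino $\tau^P$, the good occurrences of stage-$i$ towers are kept, and the two patterns are reconciled on a filling collar of fixed width $\ell$ using the uniform filling property of $Y$ (Theorem~\ref{ufs}, via the gluing Lemma~\ref{extend}); convergence a.e.\ follows from Borel--Cantelli using (\ref{epsilon}) and (\ref{ncond}), and (\ref{msrs}) follows because the small dominos occur only in the filling collars, whose total measure is controlled by (\ref{2ncond}). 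If you want to salvage your single-scale idea you would have to prove some substitute for this gluing/mixing mechanism; as written, the argument assumes away the main difficulty.
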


We obtain the following immediate corollary.

\begin{cor}
Theorem~\ref{main} holds in the case of finitely many towers.
\end{cor}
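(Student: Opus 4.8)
The plan is to derive Theorem~\ref{main}, with the index $j$ ranging over $\{1,\dots,k\}$, directly from Theorem~\ref{real}. Apply Theorem~\ref{real} to the given action $T$ to obtain a partition $\phi\colon X\to\mathcal A$ satisfying (\ref{equiv}) and (\ref{msrs}), and set $B^j=\phi^{-1}(\tau^j_{\vec 0})$ for $1\le j\le k$ and $B^P=\phi^{-1}(\tau^P_{\vec 0})$. The first step is to record the dictionary between $\phi$ and Rohlin towers. Because each array $(\phi(T^{\vec v}x))_{\vec v\in\mathbb Z^d}$ is a legal domino tiling $y_x$, one has, for $\vec v\in R_{\vec w^j}$, that $x\in\phi^{-1}(\tau^j_{\vec v})$ precisely when $y_x$ has a copy of the domino $\tau^j$ covering the origin with its $\tau^j_{\vec 0}$-corner at $-\vec v$, i.e.\ precisely when $T^{-\vec v}x\in\phi^{-1}(\tau^j_{\vec 0})=B^j$; thus $T^{\vec v}B^j=\phi^{-1}(\tau^j_{\vec v})$ for $\vec v\in R_{\vec w^j}$, and likewise $T^{\vec v}B^P=\phi^{-1}(\tau^P_{\vec v})$ for $\vec v\in R_{\vec W}$. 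Consequently the sets
\[
\sigma^j:=\bigcup_{\vec v\in R_{\vec w^j}}T^{\vec v}B^j\ \ (1\le j\le k),\qquad \sigma^P:=\bigcup_{\vec v\in R_{\vec W}}T^{\vec v}B^P
\]
are pairwise disjoint Rohlin towers, of shapes $R_{\vec w^1},\dots,R_{\vec w^k}$ and $R_{\vec W}$ respectively, and since $\phi$ partitions $X$ into the symbols of $\mathcal A$ their union is $X$ modulo a null set. Writing $a_j=\mu(\sigma^j)$ and $a_P=\mu(\sigma^P)$ we then have $\sum_j a_j+a_P=1$, while (\ref{msrs}) gives $a_j<\min_i p_i\le p_j$, so $p_j-a_j>0$ and $\sum_{j=1}^k(p_j-a_j)=a_P$.

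The second step is to subdivide the oversized tower $\sigma^P$. Here the point of the definition $W_i=\prod_{\ell=1}^k w_i^\ell$ is that $w_i^j\mid W_i$ for all $i$ and $j$, so for each fixed $j$ the rectangle $R_{\vec W}$ is exactly tiled by the $\prod_{i=1}^d (W_i/w_i^j)$ translates $R_{\vec w^j}+\vec u$, where $\vec u$ runs over $\{(m_1w_1^j,\dots,m_dw_d^j):0\le m_i<W_i/w_i^j\}$. Using that $\mu$ has no atoms (the action $T$ being free), partition $B^P$ into measurable sets $F_1,\dots,F_k$ with $\mu(F_j)=(p_j-a_j)/\prod_{i=1}^d W_i$; this is legitimate since these numbers are nonnegative and sum to $a_P/\prod_{i=1}^d W_i=\mu(B^P)$. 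For each $j$ and each $\vec u$ occurring in the tiling of $R_{\vec W}$ just described, $\bigcup_{\vec v\in R_{\vec w^j}}T^{\vec u+\vec v}F_j$ is a Rohlin tower of shape $R_{\vec w^j}$; as $\vec u$ varies these towers are pairwise disjoint and their union is $\bigcup_{\vec v\in R_{\vec W}}T^{\vec v}F_j$, of measure $\mu(F_j)\prod_{i=1}^d W_i=p_j-a_j$. Carrying this out for every $j$ partitions $\sigma^P$ into Rohlin towers, those of shape $R_{\vec w^j}$ carrying total measure $p_j-a_j$.

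The final step is to amalgamate. For fixed $j$ let $G^j$ be the union of $B^j$ with the bases of all the shape-$R_{\vec w^j}$ towers produced inside $\sigma^P$. Since those towers are pairwise disjoint and each is a Rohlin tower of shape $R_{\vec w^j}$, a short verification shows $T^{\vec v}G^j\cap T^{\vec u}G^j=\emptyset$ for distinct $\vec v,\vec u\in R_{\vec w^j}$, so $\tau^j:=\bigcup_{\vec v\in R_{\vec w^j}}T^{\vec v}G^j$ is a \emph{single} Rohlin tower of shape $R_{\vec w^j}$, with $\mu(\tau^j)=a_j+(p_j-a_j)=p_j$; this gives (\ref{distr}). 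The towers $\tau^1,\dots,\tau^k$ are pairwise disjoint, giving (\ref{disjnt}), and $\bigcup_j\tau^j=\bigcup_j\sigma^j\cup\sigma^P=X$ up to a null set, giving (\ref{noslush}); this completes the deduction, and hence the Corollary. I expect all the genuinely substantive content to sit inside Theorem~\ref{real}: the points that need care in the present argument are only the formal ones, namely that (\ref{equiv}) forces $T^{\vec v}B^j=\phi^{-1}(\tau^j_{\vec v})$ and that a disjoint union of Rohlin towers of a common shape is again a Rohlin tower of that shape.
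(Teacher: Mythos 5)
Your proof is correct and follows essentially the same route as the paper: invoke Theorem~\ref{real}, note via (\ref{equiv}) that the preimages of the symbolic dominos form disjoint undersized towers of shapes $R_{\vec w^j}$ plus a large tower of shape $R_{\vec W}$, then use $w_i^j\mid W_i$ to cut the large tower into shape-$R_{\vec w^j}$ towers carrying exactly the deficits $p_j-a_j$ and amalgamate the bases. Your write-up in fact makes explicit two points the paper treats briefly (the dictionary $T^{\vec v}B^j=\phi^{-1}(\tau^j_{\vec v})$ and the correct normalization $\mu(F_j)=(p_j-a_j)/\prod_i W_i$ for the bases), but the argument is the same.
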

\begin{proof}
Notice that (\ref{equiv}) guarantees that for all $j=1,\cdots,k$ the
set $\phi^{-1}(\tau^j)$ is a Rohlin tower of shape $R_{\vec w^j}$
and
\begin{equation}\label{disj1}
\phi^{-1}(\tau^j)\cap\phi^{-1}(\tau^{j'})=\emptyset
\end{equation}
when $j\neq j'$. Similarly $\phi^{-1}(\tau^P)$ is a Rohlin tower of
shape $R_{\vec W}$  and
\begin{equation}\label{disj2}
\phi^{-1}(\tau^P)\cap\phi^{-1}(\tau^j)=\emptyset
\end{equation}
for all $j$.

By (\ref{msrs}) each tower $\phi^{-1}(\tau^j)$ takes up less than
its required distribution.  In particular, the numbers
$\alpha_j=(p_j-\mu(\phi^{-1}(\tau^j))$ are all positive. We will
construct additional towers of each shape $R_{\vec w^j}$ with
measure $\alpha_j$ by decomposing the large tower
$\phi^{-1}(\tau^P)$.

Clearly $\mu(\tau^P)=\Sigma \alpha_j$. We partition
$E_P=\phi^{-1}(\tau^P)$,  the base of the large tower, into $k$ measurable subsets, $E^j_P$, each of measure
$\alpha_j$.  Each $E_P^j$ is the base of a tower of shape $R_{\vec
W}$. By definition $\frac {W_i}{w_i^j}\in\mathbb N$ for all
$i=1,\cdots, d$ so for each $j$ we can think of $R_{\vec W}$ as
consisting of a disjoint union rectangles of shape $B_{\vec w^j}$.
In particular, for $0\le j_i\le \frac{W_i}{w_i^j}$, sets of the form
\begin{equation*}
T^{j_1w_1^j\vec e_1+j_2w_2^j\vec e_2+\cdots+j_dw_d^j\vec
e_d}(E_P^j)
\end{equation*}
are all bases of pairwise
disjoint Rohlin towers of shape $R_{\vec w^j}$.  For each $j$ we
define $E_j$ to be the union of all these base sets with
$\phi^{-1}(\tau^j_0)$.

The sets $E_j$ are now bases of Rohlin towers of shape $R_{\vec
w^j}$.  Denoting the resulting tower by $\tau^j$ we now have
$\mu(\tau^j)=p_j$. It is clear from (\ref{disj1}), (\ref{disj2}) and
our construction that (\ref{disjnt}) and (\ref{noslush}) are also
satisfied.
\end{proof}

\section{Good Brick Walls and Uniform Filling Sets of Domino Tiling Shifts}\label{reduction}

The proof of Theorem \ref{real} is an application of the ideas in
\cite{RS4}. There (Corollary 1.3 of Theorem 1.1) it is shown that
given any free, measure preserving, and ergodic $\mathbb Z^d$ action
$T$ on a Lebesgue space $(X,\mathcal M,\mu)$  conclusion
(\ref{equiv}) of Theorem \ref{real} holds for any shift of finite
type $Y$ which has a {\it uniform
  filling set}, defined below. Here we modify that proof, first to
eliminate the need for ergodicity in the argument, and second to also obtain
conclusion (\ref{msrs}) of Theorem \ref{real}.

We begin by defining a uniform filling set for a shift of finite
type. Let $\vert A\vert$ and $A^c$ denote the cardinality and
complement of a set $A$ respectively.  Given a rectangle $R_{\vec
w}\subset\mathbb Z^d$ let $R^\ell_{\vec w}$ denote the rectangle
$R_{\vec w}$ together with an outer filling collar of uniform width
$\ell$.  More formally,
\begin{equation*}
R^\ell_{\vec w}=\{\vec v\in\mathbb Z^d: -\ell\le v_i\le
w_i-1+\ell, 1\le i\le d\},
\end{equation*}

\begin{defn}
A shift of finite type $Y$ has a {\em uniform filling set} if
there is a translation invariant subset $Z$ of $Y$ with $\vert
Z\vert>1$ and an integer $\ell>0$ with the property that given any
$\vec w\in\mathbb N^d$ and $z,z'\in Z$, there is a point $y\in Y$
with the property that
\begin{align*}
y[R_{\vec w}]=z[R_{\vec w}]
\end{align*}
and
\begin{equation*}
y[(R^\ell_{\vec w})^c]=z'[(R^\ell_{\vec w})^c].
\end{equation*}
We call $\ell$ the {\em filling length} of the set $Z$.
\end{defn}

\begin{defn}
Let $\vec w^1,\cdots,\vec w^k\in\mathbb N^d$, with $k\ge 2$ and
let $Y=Y(\vec w^1,\cdots,\vec w^k)$. Let $\tau$ be a domino of shape $R_{\vec w}$.
The {\em good brick wall with domino $\tau$} is the periodic word
$y\in Y$ with the property that for all $\vec v\in\mathbb Z^d$
\begin{equation*}
y[\vec v]=\tau_{(v_1 \mod w_1,\cdots, v_d \mod w_d)}.
\end{equation*}
\end{defn}
In our arguments we will be using the good brick wall with the
special domino $\tau^P$.  Figure \ref{Fi:goodbrick} shows part of
such a good brick wall $y^P$ for a two-dimensional example.

\begin{figure}
\scalebox{0.4}{\includegraphics{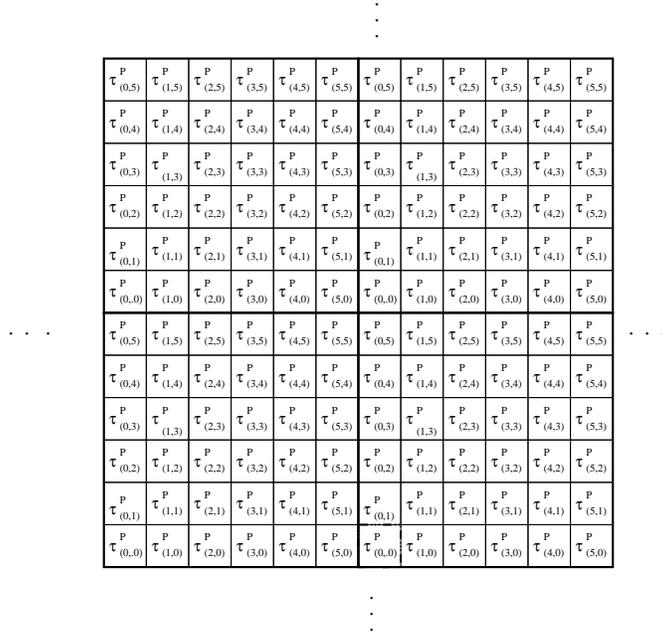}} \caption{The
good brick wall $y^P$ in $Y(\vec w^1,\vec w^2)$ with $\vec
w^1=(3,2)$
  and $\vec w^2=(2,3)$. In this case $\vec W=(6,6)$.  The origin, $y^P[\vec 0]$ is indicated by a symbol
with dashed boundary  labelled $\tau^P_{(0,0)}$. The blocks
corresponding to copies of the domino $\tau^P$ are indicated by bold
boundaries. } \label{Fi:goodbrick}
\end{figure}

\begin{thm}\label{ufs}
Let $Y$ be the $\vec w^1,\cdots,\vec w^k$ domino tiling shift for $\vec w^1,\cdots,\vec w^k\in\mathbb N^d, k\ge 2$ which satisfy
the conditions of Theorem \ref{main}. Let $y^P\in Y$ be the good brick wall with
domino $\tau^P$. Then the set
\begin{equation*}
Z=\cup_{\vec v\in\mathbb Z^d} T^{\vec v}(y^P)
\end{equation*}
is a uniform filling set for $Y$.
\end{thm}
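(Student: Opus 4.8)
The plan is to check that $Z$ is an admissible candidate, then build the interpolating point $y$ explicitly by ``brick-wall surgery'', and finally verify $y\in Y$. First, $Z=\cup_{\vec v}T^{\vec v}(y^P)$ is translation invariant since it is a union of translates, and because $y^P$ has least period $\vec W$ we get $|Z|=\prod_i W_i$, which exceeds $1$ once some $W_i\ge 2$ (if every $W_i=1$ the problem is degenerate). So the content is to exhibit a single $\ell>0$ (depending only on $\vec w^1,\dots,\vec w^k$) such that for all $\vec w\in\mathbb N^d$ and $z,z'\in Z$ there is $y\in Y$ with $y[R_{\vec w}]=z[R_{\vec w}]$ and $y[(R^\ell_{\vec w})^c]=z'[(R^\ell_{\vec w})^c]$. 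Note $z$ and $z'$ are each translates of $y^P$, hence ``brick walls'' tiling $\mathbb R^d$ by copies of $\tau^P$ along a lattice $\vec a+\vec W\Z^d$, resp.\ $\vec a'+\vec W\Z^d$.

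For the construction, let $\tilde R$ be the union of the $\tau^P$-tiles of $z$ that meet $R_{\vec w}$; this is a box containing $R_{\vec w}$, with all side lengths divisible by the corresponding $W_i$, protruding beyond $R_{\vec w}$ by at most $W_i-1$ in direction $i$. Put $y=z$ on $\tilde R$. Now enlarge $\tilde R$ to a box $\mathcal B$ one coordinate at a time, inserting on each side, in each direction $i$, a ``transition slab'' of width $\delta_i^{\pm}$. The residue class of $\delta_i^{\pm}$ mod $W_i$ is forced by the requirement that the corresponding face of $\mathcal B$ land on the brick-wall lattice of $z'$ (the two classes for $\delta_i^{-},\delta_i^{+}$ are negatives of each other, so $\mathcal B$ is automatically $z'$-aligned and its side lengths stay divisible by the $W_i$). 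I take $\delta_i^{\pm}$ to be the smallest non-negative representative of that class which is expressible as $\sum_j a_jw_i^j$, $a_j\ge 0$; such a representative exists and is at most $g_i+W_i$, where $g_i$ is the Frobenius number of $\{w_i^1,\dots,w_i^k\}$ --- and this is exactly (and the only place) where hypothesis (\ref{relpr}), $\gcd(w_i^1,\dots,w_i^k)=1$, is used. Each transition slab is filled by small dominoes: cut it, in its thin direction $i$, into consecutive intervals of lengths $w_i^j$, and on each slice lay the doubly periodic grid of copies of $\tau^j$ (period $w_i^j$ in direction $i$, exactly filling the slice; period $w_{i'}^j$ in every other direction $i'$, which divides the slab's extent there since that extent is a multiple of $W_{i'}$). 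Outside $\mathcal B$ set $y=z'$. Since $\mathcal B$ protrudes beyond $R_{\vec w}$ by at most $(W_i-1)+(g_i+W_i)$ in direction $i$, taking $\ell:=\max_i(g_i+2W_i)$ --- independent of $\vec w$ --- gives $R_{\vec w}\subseteq\tilde R\subseteq\mathcal B\subseteq R^\ell_{\vec w}$, so $y$ has the required agreements.

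It then remains to check $y\in Y$, i.e.\ that no forbidden block appears. Every seam in $y$ is between $z$ and a transition slab, between two transition slabs, between a transition slab and $z'$, or (when some $\delta_i^{\pm}=0$) directly between $z$ and $z'$; in each case, because every slab's grid was aligned to begin flush with that slab's faces and to terminate flush on the far side, and because $\tilde R$, $\mathcal B$ are tile-aligned for $z$, $z'$ respectively, a seam always presents the last column of one domino meeting the first column of another in the relevant direction, which the forbidden-block rules permit. The main labor of the proof --- and the part I expect to be the real obstacle --- is organizing this bookkeeping: since the enlargement is coordinate-by-coordinate, a later transition slab spans the entire cross-section assembled so far (core plus all earlier slabs), and one must verify that all these overlapping slabs abut along clean tile boundaries. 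The structural fact that makes this go through is that after each coordinate has been processed the current box again has all side lengths divisible by the corresponding $W_i$, so every subsequent slab's grids fit; this is also why one cannot simply chop the annulus $\mathcal B\setminus\tilde R$ into a grid of boxes and tile each one independently, as individual corner boxes need not be tileable.
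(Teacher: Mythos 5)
Your proposal is correct and is essentially the paper's own argument: both complete the large-domino tiles of $z$ around $R_{\vec w}$, align the outer region with the $\tau^P$-lattice of $z'$, and fill the intervening collar with slabs of a single small domino type, using the $\gcd$ hypothesis (via representability of all sufficiently large integers, equivalently your Frobenius-number bound, the paper's equation (\ref{numthry})) to choose the slab widths, with $\ell$ a constant depending only on $\vec w^1,\dots,\vec w^k$. The only differences are presentational: the paper works out $d=2$ with a four-rectangle decomposition of the collar and $\ell=R+2W_1+2W_2$, while you give the coordinate-by-coordinate slab insertion in general $d$ and take the minimal representable width in the forced residue class.
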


\begin{proof} For notational convenience we give the proof for the
case $d=2$.  The argument for higher dimensions is analogous.
Since the vectors $\vec w^1,\cdots,\vec w^k$ satisfy (\ref{relpr})
of Theorem \ref{main}, there exists $R\in\mathbb N$ with the
property that for all integers $r\ge R$ there exist
non-negative integers $a_j,b_j$ with $j=1,\cdots, k$ such that
\begin{equation}\label{numthry}
r=\sum_{j=1}^k a_jw_1^j=\sum_{j=1}^k b_jw^j_2.
\end{equation}

Let $\ell = R+2W_1+2W_2$, where as before $\vec
W=(W_1,W_2)=(\prod_{j=1}^kw_1^j,\prod_{j=1}^kw_2^j)$. We will show
that $Z$ is a uniform filling set with filling length $\ell$.

Let $z,z'\in Z$, and a rectangle $B_{\vec u}+\vec v\subset\mathbb
Z^2$ be given, with $\vec u\in\mathbb N^2$ and $\vec v\in\mathbb
Z^2$. Let
\begin{equation*}
{\bf a}=z[B_{\vec u}+\vec v]\qquad\text{and}\qquad {\bf
b}=z'[(B^\ell_{\vec u}+\vec v)^c].
\end{equation*}

First notice that ${\bf a}$ might contain incomplete dominos (see
Figure \ref{Fi:badcut}).  These incomplete copies of $\tau^P$ can be
completed by using at most $W_1-1$ columns on either side of ${\bf
a}$ and at most $W_2-1$ rows on the top and bottom of ${\bf a}$.
Similarly, any incomplete dominos that might occur in ${\bf b}$ can
be completed by using at most another $W_1-1$ rows and $W_2-1$
columns of the filling collar.

\begin{figure}
\scalebox{0.4}{\includegraphics{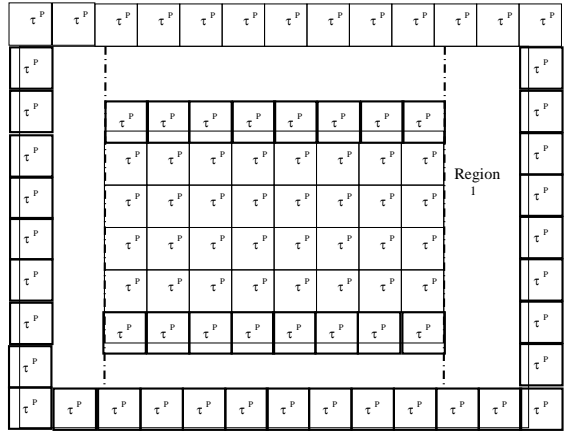}} \caption{The copies
of the domino $\tau^P$ which were incomplete in the original blocks
${\bf a}$ and $\bf b$ are shown in bold. The four rectangular
regions are indicated with a dashed line boundary. The vertical
boundary of Region 1 contains $8$ copies of $\tau^P$, so each
occurrence of the domino $\tau^j$ necessary to tile the width of
Region $1$ will require a column containing $\frac{8*W_2}{w_2^j}$
copies of the domino.} \label{Fi:badcut}
\end{figure}

The resulting new filling collar has width greater than $R$. Notice
also that each piece of its boundary is the union of the boundaries
of complete copies of the domino $\tau^P$. Thus, the filling collar
can be decomposed into four rectangles with the property that each
rectangle either has width greater than $R$ and height a multiple of
$W_2$, or has height greater than $R$ and width a multiple of $W_1$
(see Figure \ref{Fi:badcut}.)

We show how to fill in the tiling for the rectangle labelled Region
$1$ in Figure \ref{Fi:badcut}, the argument for the other regions is
identical. Using (\ref{numthry}) we obtain non-negative integers
$a_j$ so that the width of the rectangle can be written as
$\sum_{j=1}^ka_jw_1^j$. Working from left to right, we start tiling
Region $1$ by first using $a_1$ copies of the tile $\tau^1$ to tile
a strip of width $a_1w_1^1$.  Since $\tau^1$ has height $w_2^1$, and
the height of Region $1$ is a multiple of $W_2$, and therefore of
$w_2^1$, we can tile this strip without gaps or overlap with the
tile $\tau^1$. without overlapping the completion of block $\bf b$
at all. Next, we tile another strip of width $a_2w_1^2$ using
dominos of type $\tau^2$, etc.

This filling process clearly results in a $\vec w^1,\cdots,\vec
w^k$ domino tiling of the plane, and thus a point in $Y$.
\end{proof}

\section{Proof of Theorem \ref{real}}\label{finitephi}
We first establish some notation.  For $s>0$ and $\vec u\in\mathbb
N^d$ we have already in Section \ref{reduction} defined $R^s_{\vec
u}\subset\mathbb Z^d$ to be $R_{\vec u}$ together with an outer
collar of uniform width $s$.
We denote the outer collar itself by:
\begin{equation*}
\partial^s(R_{\vec u})=R^s_{\vec u}\setminus R_{\vec u}
\end{equation*}
and refer to it as the {\it outer $s$-collar} of $R_{\vec u}$.
Still assuming $s>0$, we define the {\it $s$-interior of $R_{\vec
u}$} by
\begin{equation*}
R^{-s}_{\vec u}=\{\vec v\in R_{\vec u}: s\le v_i\le u_i-1-s, i\le
d\}
\end{equation*}
and we call
\begin{equation*}
\partial^{-s}(R_{\vec u})=R_{\vec u}\setminus R^{-s}_{\vec u}
\end{equation*}
the {\it inner $s$-collar of $R_{\vec u}$}.

For notational convenience we will refer to squares of size $n$,
namely $R_{\vec u}$ with $\vec u=(n,\cdots,n)$, as $R_n$.

Our argument will rely heavily on the following gluing property of
shifts of finite type with uniform filling sets. If a shift of
finite type $Y$ has a uniform filling set $Z$, one can glue blocks
from other points $y$ in the shift into points from $Z$, as long
as the block in $y$ has sufficient overlap with a block from $Z$.
The next lemma is a formal statement of this
fact.
\begin{lem}\label{extend}{\em(Lemma 3.5 from \cite{RS4})}.
Suppose $Y$ is a shift of finite type with step size $s$ and
filling set $Z$ with filling length $\ell$. Fix a rectangle
$R_{\vec u}\subset\mathbb Z^d$, with $\vec u\in\mathbb N^d$.
Suppose $y\in Y$ is such that there exists $z\in Z$ with
\begin{equation*}
y\big[\partial^{-s}\big[R_{\vec
u}\big]\big]=z\big[\partial^{-s}R_{\vec u}\big].
\end{equation*}
Then given $z'\in Z$ we can find a point $y^*\in Y$ such that
\begin{equation}
y^*\big[R_{\vec u}\big]=y\big[R_{\vec u} \big] \mbox{\ and\ }
y^*\big[(R^\ell_{\vec u})^c\big]= z'\big[(R^\ell_{\vec u})^c\big].
\end{equation}
\end{lem}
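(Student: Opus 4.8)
The plan is to obtain $y^{*}$ by pasting the $R_{\vec u}$-block of $y$ into a point that already reads $z$ on a buffer of width $s$ just outside $R_{\vec u}$ and reads $z'$ far away. The hypothesis $y[\partial^{-s}(R_{\vec u})]=z[\partial^{-s}(R_{\vec u})]$ supplies a matching buffer of width $s$ just \emph{inside} $R_{\vec u}$, and the uniform filling property of $Z$ supplies the buffer just \emph{outside} together with the eventual agreement with $z'$; the two buffers make the paste legal across $\partial R_{\vec u}$.

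Concretely, first I would apply the uniform filling property of $Z$ with the inflated rectangle $R^{s}_{\vec u}$ playing the role of $R_{\vec w}$: since $R^{s}_{\vec u}$ is a translate of a genuine rectangle $R_{\vec w}$ (namely $\vec w=\vec u+(2s,\dots,2s)$) and $Y,Z$ are translation invariant, this is legitimate, and it produces $p\in Y$ with
\[
p[R^{s}_{\vec u}]=z[R^{s}_{\vec u}]\qquad\text{and}\qquad p\big[(R^{s+\ell}_{\vec u})^{c}\big]=z'\big[(R^{s+\ell}_{\vec u})^{c}\big],
\]
using that the outer $\ell$-collar of $R^{s}_{\vec u}$ is $R^{s+\ell}_{\vec u}$. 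Then set $y^{*}[\vec v]=y[\vec v]$ for $\vec v\in R_{\vec u}$ and $y^{*}[\vec v]=p[\vec v]$ for $\vec v\notin R_{\vec u}$. Granting $y^{*}\in Y$, the conclusion $y^{*}[R_{\vec u}]=y[R_{\vec u}]$ holds by construction, and since $(R^{s+\ell}_{\vec u})^{c}\subseteq R_{\vec u}^{c}$ we get $y^{*}=p=z'$ off $R^{s+\ell}_{\vec u}$; this is the agreement off $R^{\ell}_{\vec u}$ in the statement once one carries the step size $s$ along in the filling length (harmless, $s$ being a fixed constant of $Y$), which I would either do silently or record by taking $\ell+s$ as the operative filling length.

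The real work is verifying $y^{*}\in Y$. Because $Y$ has step size $s$, a configuration lies in $Y$ as soon as each of its restrictions to a translate of the cube $R_{s+1}$ is an allowed pattern. A translate of $R_{s+1}$ contained in $R_{\vec u}$ reads a pattern of $y\in Y$, and one contained in $R_{\vec u}^{c}$ reads a pattern of $p\in Y$; both are allowed. Any remaining translate of $R_{s+1}$ meets both $R_{\vec u}$ and $R_{\vec u}^{c}$, hence lies within $\ell^{\infty}$-distance $s$ of $\partial R_{\vec u}$ and is therefore contained in $\partial^{-s}(R_{\vec u})\cup\partial^{s}(R_{\vec u})$; on $\partial^{-s}(R_{\vec u})$ we have $y^{*}=y=z$ by hypothesis, while on $\partial^{s}(R_{\vec u})\subseteq R^{s}_{\vec u}$ we have $y^{*}=p=z$ by the choice of $p$, so this cube reads a pattern of $z\in Y$ and is again allowed. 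Hence $y^{*}\in Y$. I expect this interface check to be the only obstacle, and it is precisely to make it go through that the hypothesis is imposed on the inner $s$-collar and that one inflates $R_{\vec u}$ to $R^{s}_{\vec u}$ before invoking uniform filling: this guarantees that along a band of width $s$ on each side of $\partial R_{\vec u}$ the two pasted pieces simultaneously coincide with the single legal point $z$.
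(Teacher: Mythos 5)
The paper itself gives no proof of this lemma --- it is quoted from \cite{RS4} and the reader is referred there --- so there is no in-paper argument to compare against, and your proof must stand on its own. Its core is sound and is the expected one: paste $y[R_{\vec u}]$ into a point supplied by the uniform filling property and verify membership in $Y$ window by window using the step size $s$, the hypothesis band $y=z$ on $\partial^{-s}(R_{\vec u})$ and the band $p=z$ just outside guaranteeing that every boundary-crossing window of side $s+1$ reads a pattern of $z$. Your window analysis (a window meeting both $R_{\vec u}$ and $R_{\vec u}^{c}$ is forced into $\partial^{-s}(R_{\vec u})\cup\partial^{s}(R_{\vec u})$) is correct, as is the appeal to translation invariance of $Z$ and $Y$ to apply the filling property to a translated rectangle.

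The one defect is quantitative, and you should not dispose of it ``silently'': because you inflate to $R^{s}_{\vec u}$ before invoking uniform filling, your $y^{*}$ agrees with $z'$ only off $R^{\ell+s}_{\vec u}$, whereas the lemma asserts agreement off $R^{\ell}_{\vec u}$. You can obtain the statement verbatim by moving the paste line inward instead of outward: apply the filling property to $R_{\vec u}$ itself, getting $p\in Y$ with $p[R_{\vec u}]=z[R_{\vec u}]$ and $p[(R^{\ell}_{\vec u})^{c}]=z'[(R^{\ell}_{\vec u})^{c}]$, and set $y^{*}=y$ on $R^{-s}_{\vec u}$ and $y^{*}=p$ elsewhere. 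On the inner collar $\partial^{-s}(R_{\vec u})$ one has $p=z=y$ by hypothesis, so $y^{*}[R_{\vec u}]=y[R_{\vec u}]$ still holds; any window of side $s+1$ meeting $R^{-s}_{\vec u}$ is contained in $R_{\vec u}$ and there reads $y$, while every other window reads $p$, so the same step-size check gives $y^{*}\in Y$, now with the collar exactly $\ell$. For the application in this paper the loss of $s$ would only shift constants (the domino tiling shift is one-step, so $s=1$), but the lemma as stated should be proved as stated rather than with a redefined filling length.
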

We refer the reader to \cite{RS4} for the proof.

Given a Rohlin tower of shape $R\subset\mathbb Z^d$ with base $F$
and $x\in X$ with the property that $T^{\vec n}x\in F$ for some
$\vec n\in\mathbb Z^d$ we call $T^{\vec n+R}(x)$ an {\it occurrence
of the tower $T^R(F)$}, and we call $y=T^{\vec n}(x)\in F$ the {\it
base point} of this occurrence.  The set $T^{R}(x)$ will be called
the {\it slice of the tower based at $x$}.

\subsection{The parameters of the construction}
We will construct the function $\phi$ of Theorem \ref{real} as a
limit of a sequence of functions $\phi_i$ defined on levels of
Rohlin towers of increasing measures.   We let $Z$ be as in
Theorem~\ref{ufs}.  We denote the filling length of $Z$ by $\ell$,
and fix $z\in Z$.  Let $\epsilon_i$ be a sequence of positive
numbers with the property that for all $i$
\begin{equation}\label{epsilon}
\epsilon_i<\frac1{4^i}.
\end{equation}
Let $n_i$ be a sequence of positive integers increasing to
infinity with the property that
\begin{equation}\label{ncond}
\frac{2d(\ell+2+n_{i-1})}{n_i}<\frac1{4^i}
\end{equation}
and
\begin{equation}\label{2ncond}
2d\ell\sum_{i=1}^\infty\frac{1}{n_i}< \min_{i=1,\cdots,k}p_i.
\end{equation}
Finally we apply the $\mathbb Z^d$ Rohlin Lemma to obtain
sets $F_i\in\mathcal M$ which are bases of a Rohlin tower of shape
$R_{n_i}$ and error sets $B_i$ with
\begin{equation}\label{error}
\mu B_i<\epsilon_i.
\end{equation}

\subsection{Constructing the maps $\phi_i$}
The function $\phi_1$ will be defined on $R_{n_1}^{-(\ell+1)}$.  For
$\vec v\in R_{n_1}^{-(\ell+1)}$ we set
$\phi_1(T^{\vec v}x)=z[\vec v]$.

For ease of notation we describe the construction of $\phi_2$.
This step encompasses all the details of the general inductive
step.   We will define $\phi_2$ on $T^{R_{n_2}^{-(\ell+1)}}(F_2)$.
For each $x\in F_2$ we define the set
\begin{equation*}
B(x)=\{\vec b_1\in R_{n_2}^{-(\ell+2+n_1)}: T^{\vec b_1}x\in F_1\}.
\end{equation*}

Note that these are base points of occurrences of $T^{R_{n_1}}(F_1)$ in
$T^{R_{n_2}}(x)$
\begin{equation}\label{empty}
\text{which are completely contained in
$T^{R_{n_2}^{-(\ell+2)}}(x)$}.
\end{equation}
We refer to these as {\it good occurrences} of the first stage tower.

Partition $F_2$ into subsets $F_2^1,\cdots,F_2^{k_2}$ such that the set of
indices $B(x)$ is constant on each subset.  For each $t=1,\cdots,k_2$ we refer
to the relevant constant set of indices by $B(t)$.

Fix such a $t$.  For all $x\in F_2^t$ we set
$\phi_2(T^{\vec v}x)=z[\vec v]$
for the following choices of $\vec v$:
\begin{align}
\vec v&\in \bigg(\cup_{\vec b\in B(t)} \vec b+R_{n_1}\bigg)^c\cap T^{R_{n_2}^{-(\ell+1)}}(F_2^t) \label{ambient}\\
\vec v&\in \partial^{-1}[\vec b+R_{n_1}]\text{\ where\ } \vec b\in B(t)\label{safeamb}
\end{align}

Locations specified in (\ref{ambient}) puts symbols from
$z[R_{n_2}]$ onto locations in the $(\ell+1)$-interior of $R_{n_2}$
which are not covered by good copies of the first stage tower. We
call this the {\it ambient word}.  Locations specified in
(\ref{safeamb}) extend the ambient picture from $z[R_{n_2}]$ to the
interior $1$-collar around good first stage towers.

For $\vec v\in \vec b+R^{-(\ell+1)}_{n_1}$ where $\vec b\in B(t)$, namely in locations
that lie in the
domain of $\phi_1$, we set
\begin{equation*}
\phi_2(T^{\vec v} x)=\phi_1(T^{\vec v} x).
\end{equation*}
We note that each good occurrence of a first stage tower now sees
\begin{align}\label{2ndstep}
\text{  symbols from }
&z[R_{n_1}] \text{ in positions } \vec b+R_{n_1}^{-(\ell+1)}\text{ and }\\
\text{ symbols from }&z[R_{n_2}] \text{ in locations }\vec
b+\partial^{-1}(R_{n_1}).\notag
\end{align}
where $\vec b\in B(t)$.

Note that $\phi_2$ is as yet undefined on
a
collar of width $\ell$ in all good occurrences of the first stage
tower.  We call this the {\it filling collar} of the tower.  Since the
collars of width one immediately before and after the filling
collar have been assigned symbols from points in $Z$ we can now
interpolate between them and assign symbols to the filling collar
to obtain a block from $Y$. More formally, using (\ref{2ndstep})
and the fact that $Z$ is a uniform filling set we can now find a
point $z_2^t\in Y$ such that for all $\vec b\in B(t)$
\begin{align*}
z_2^t\big[\vec b+R_{n_1}^{-(\ell+1)}\big]&=z\big[R_{n_1}^{-(\ell+1)}\big]\\
z_2^t\big[\vec b+\partial^{-1}(R_{n_1})\big]&=z\big[\vec b+\partial^{-1}(R_{n_1})\big].
\end{align*}

We set
\begin{equation*}
\phi_2(\vec v) = z_2^t[\vec v]
\end{equation*}
for all $\vec v\in \vec b+\partial^\ell(R^{-(\ell+1)}_{n_1})$.

For the inductive step we need to make two observations.  Since
$z_2^t\in Y$, $\phi_2(T^{R_{n_2}^{-(\ell+1)}}(F_2^t))$ is a block
which occurs in $Y$. In addition, (\ref{empty}) and
(\ref{ambient}) guarantee that
\begin{equation}\label{induct}
\text{locations } \vec v\in\partial^1(R_{n_2}^{-(\ell+2)})\text{ are assigned
  the symbol } z[\vec v].
\end{equation}
Thus, even though $\phi_2(T^{R_{n_2}^{-(\ell+1)}}(F_2^t))$ is not
a block from $Z$, we will be able to glue it into blocks from $Z$
in the next stage of the construction by using Lemma~\ref{extend}.

Repeating the process for all $t=1,\cdots,k_2$ we define $\phi_2$
on $T^{R_{n_2}^{-(\ell+1)}}(F_2)$.

\subsection{Constructing $\phi$} We first identify those points $x\in
X$ for which $\lim_i\phi_i(x)$ does not exist.  These are exactly
those points who lie in the error sets $B_i$ for infinitely many $i$
or lie in the inner $(\ell+1+n_{i-1})$-collar of the $i$th tower for
infinitely many $i$. For each $i$
\begin{equation*}
\mu\bigg(B_i\cup
T^{\partial^{-(\ell+1+n_{i-1})}(R_{n_i}})(F_i)\bigg)<\epsilon_i+\frac{2d(\ell+2+n_{i-1})}{n_i}
\end{equation*}
so by (\ref{epsilon}) and (\ref{ncond}) we can use the easy
direction of the Borell-Cantelli Lemma to conclude that for a.e.
$x\in X$ the requisite limit exists.

To obtain (\ref{equiv}) we argue as follows.  This argument is identical to the convergence argument
given in \cite{Ru1}, we include it here for completeness' sake. Let $G_0$ be the
subset of $X$ consisting of points $x$ for which
$\lim_{i\rightarrow\infty}\phi_i(x)$ exists and who land in the
middle ninth of a tower infinitely often.  Set $G_1=\cup_{\vec
v\in\mathbb Z^2}T^{\vec v}G_0$. $G_1$ is clearly a measurable and
invariant set and $\mu(G_1)=\alpha>\frac1{36}$.

Further, for $x\in G_1$since the $n_i$ grow to infinity we have that for any
$\vec v\in\mathbb Z^d$, there exist infinitely many $i$ such that
$x$ and $T^{\vec v}x$ are in the same slice of the $i$th stage
tower.  For such $i$ clearly
\begin{equation*}
\phi_i(T^{\vec v}x)=S^{\vec v}(\phi_i(x))
\end{equation*}
so $\lim_{i\rightarrow\infty}\phi_i(T^{\vec v}x)$ must exist and
(\ref{equiv}) must hold.

If $\alpha<1$ we can use the same procedure, adjusting the measures $p_1,\cdots,p_k$ as necessary, to obtain a set
$G_2\subset G_1^c$ such that $\mu(G_1\cup G_2)^c<(1-\alpha)^2$.
Countably many such steps yields a countable union of invariant
measurable sets $G_n$ with the property that $\mu(\cup G_n)=1$ and
on each set $\phi$ is defined and satisfies (\ref{equiv}).

To see that (\ref{msrs}) holds, we note that our tiling argument
guarantees that the only part of the space that is not tiled by the
domino $\tau^P$ is the filling collars of the towers. Thus by
(\ref{2ncond}) we have
\begin{equation*}
\mu(\phi^{-1}(\tau^P))>1-\sum_{i=1}^\infty\frac{2d\ell}{n_i}>1-\lim_{i=1,\cdots,k}p_i.
\end{equation*}

\section{Proof of Theorem~\ref{real} in the case of countably many
towers}\label{phi}

Our construction here will give a measurable tiling of the orbits of
the action by finitely many actual size dominos and infinitely many
large dominos.  As before, the actual size small dominos will take
up less than their alloted measure of space and we will use the
large dominos to carve out the required size small towers of
appropriate measures.

There are several new issues we must address.  First, we must ensure
that we have infinitely many large domino towers, since a tower of
size $\tau^j$ can only be constructed out of a large domino whose
dimensions are the product of at least the first $j$ dimensions. Our
tiling construction then has to be modified to incorporate large
dominos of growing scales.

Second, a large domino whose dimensions are a product of the first
$j$ dimensions must be entirely used up when we construct small
towers of size $1,\cdots,j$, since it will be too small to use in
later stages.  We thus have to bound the measures of the larger
dominos appropriately.

Now for the details.  Choose a sequence $n_k\in\mathbb N$ so that
for all $i=1,\cdots,d$,
\begin{align}
gcd(w_i^1,\cdots,w_i^{n_1})&=1,\label{gcdkickedin}\\
\epsilon_k =\sum_{j=n_k+1}^\infty p_j&<\frac 1{8^k},\qquad{and}\label{tailcontrol}\\
\sum_{j=n_k}^{n_{k+1}}p_j&>\sum_{j=n_{k+1}}^\infty p_j.\label{cntgetall}
\end{align}
Following the notation established in Section ~\ref{reduction} we
let $Y_k=Y(\vec w^1,\cdots,\vec w^{n_k})$ and $Z_k$ denote the
uniform filling set of $Y_k$ generated by the large domino
$\tau^{P_k}$.  We let $Y_\infty$ denote the shift on the infinite
alphabet required to define all tiles $\tau^1,\cdots$.  As in the
finite case we will first prove that we can construct a factor map
$\phi:X\rightarrow Y_\infty$ satisfying conditions (\ref{equiv}) and
(\ref{msrs}).

We prove that any block from $Z_k$ can be glued into a word from
$Z_1$ using a collar whose size depends only on $k$ and not on the
size of the block itself.  This is the key ingredient which will
allow us to tile with growing sizes of large dominos.

\begin{cor}\label{genfil}\em{(of Theorem~\ref{ufs})}.  Given $k\in\mathbb N$, there exists $l_k\in\mathbb N$ such that for all $\vec w\in\mathbb N^d$, $z\in Z_k$ and $z'\in Z_1$, there exists a point $y\in Y_k$ such that
\begin{align*}
y[R_{\vec w}] &= z[R_{\vec w}]\\
y[(R^\ell_{\vec w})^c]&=z'[(R^\ell_{\vec w})^c]
\end{align*}
with the property that $y[R_{\vec w}^\ell\setminus R_{\vec w}]$ is a union of tiles only of type $\tau^1,\cdots,\tau^{n_1}$.
\end{cor}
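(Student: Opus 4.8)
The plan is to reduce Corollary~\ref{genfil} to a finite iteration of the uniform filling property established in Theorem~\ref{ufs}. The key observation is that the large domino $\tau^{P_k}$ of $Y_k$ is built from the dimensions $\vec w^1,\cdots,\vec w^{n_k}$, and $Y_1\subset Y_k$ since every $\tau^1,\cdots,\tau^{n_1}$ tile is available in $Y_k$ as well; moreover the good brick wall $y^{P_k}$ of $Y_k$, restricted to any bounded region, can be retiled using only the dominos $\tau^1,\cdots,\tau^{n_1}$ once we are willing to pay an $n_1$-dependent collar. So the argument proceeds in two moves: first move \emph{down} from $Z_k$ to $Z_1$ inside $Y_k$, paying a fixed collar; then observe that the filling material we introduced consists only of small tiles.

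First I would apply Theorem~\ref{ufs} to $Y_k$: since $\vec w^1,\cdots,\vec w^{n_k}$ satisfy~(\ref{relpr}), $Z_k$ is a uniform filling set for $Y_k$ with some filling length $\ell_k$. Given $z\in Z_k$, $z'\in Z_1\subset Z_k$ (viewing $Z_1$ inside $Y_k$ by regarding a $Y_1$-point as a $Y_k$-point using only the first $n_1$ tile types), and $\vec w\in\mathbb N^d$, Theorem~\ref{ufs} produces $y\in Y_k$ with $y[R_{\vec w}]=z[R_{\vec w}]$ and $y[(R^{\ell_k}_{\vec w})^c]=z'[(R^{\ell_k}_{\vec w})^c]$. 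The only thing left to control is the \emph{transition collar} $R^{\ell_k}_{\vec w}\setminus R_{\vec w}$. Re-examining the proof of Theorem~\ref{ufs}, the point $y$ there is constructed by completing incomplete copies of the large domino on both sides and then filling the four rectangular regions of the collar using the number-theoretic identity~(\ref{numthry}) \emph{entirely with the small dominos $\tau^1,\cdots,\tau^{n_1}$} — in fact~(\ref{gcdkickedin}) guarantees that for $r\ge R$ we may take $a_j=b_j=0$ for $j>n_1$, so the filling of each region uses only $\tau^1,\cdots,\tau^{n_1}$. The regions that complete incomplete copies of $\tau^{P_k}$ near $\partial R_{\vec w}$ could in principle reintroduce a large domino, but since $z'\in Z_1$ agrees outside $R^{\ell_k}_{\vec w}$ and contains no large dominos there, one adjusts the bookkeeping so that the boundary completion is absorbed into the region-filling step; the net effect is that $y[R^{\ell_k}_{\vec w}\setminus R_{\vec w}]$ is a union of copies of $\tau^1,\cdots,\tau^{n_1}$ only.

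Thus $l_k:=\ell_k$ works and the corollary follows. The main obstacle — and the one place where care is required — is precisely the treatment of the incomplete large dominos $\tau^{P_k}$ straddling the boundary of $R_{\vec w}$: the naive reading of Theorem~\ref{ufs} completes them into full copies of $\tau^{P_k}$, which would violate the ``small tiles only'' conclusion. The fix is to note that completing such a fragment of $\tau^{P_k}$ and then immediately dissecting the completed copy into small dominos (using $W_i/w_i^j\in\mathbb N$ exactly as in the Corollary following Theorem~\ref{real}) produces a small-tile filling of that boundary strip, and the width needed is still bounded by a constant depending only on $W_1,W_2,\ldots$ and $R$, hence only on $k$. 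I would present this as a short modification of the proof of Theorem~\ref{ufs} rather than a fresh argument, emphasizing that~(\ref{gcdkickedin}) is what makes the small tiles alone sufficient for all the filling.
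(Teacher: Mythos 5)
Your overall route is the paper's: re-run the filling algorithm of Theorem~\ref{ufs} inside $Y_k$, use the extra collar width to complete the large-domino fragments cut by the boundary, and observe that (\ref{gcdkickedin}) lets the identity (\ref{numthry}) be realized with $a_j=b_j=0$ for $j>n_1$, so that the actual filling uses only the tiles $\tau^1,\cdots,\tau^{n_1}$ and the collar width depends only on $k$ (through $R$ and the dimensions of $\tau^{P_k}$). Up to that point your proposal and the paper's proof coincide.

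The step that does not work is your announced fix for the copies of $\tau^{P_k}$ straddling $\partial R_{\vec w}$: completing such a fragment and then ``immediately dissecting the completed copy into small dominos.'' In the symbolic system a copy of $\tau^{P_k}$ is a single tile carried by its own symbols $\tau^{P_k}_{\vec v}$, and the part of a straddling copy lying inside $R_{\vec w}$ belongs to $z[R_{\vec w}]$, which is frozen by the requirement $y[R_{\vec w}]=z[R_{\vec w}]$. You therefore cannot replace the completed copy by a grid of small dominos (using $W_i/w_i^j\in\mathbb N$): the dissection would change symbols inside $R_{\vec w}$, while re-tiling only the protruding portion would leave a forbidden partial $\tau^{P_k}$ configuration. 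The paper does not attempt this; it simply completes the straddling copies of $\tau^{P_k}$ --- this is exactly what the extra width of the collar is for --- so that the inner region grows to a rectangle whose boundary consists of full large dominos from $Z_k$, and only the remaining part of the collar is tiled by $\tau^1,\cdots,\tau^{n_1}$; that is the intended reading of the final clause of the corollary. The same remark applies on the outside: your claim that $z'$ ``contains no large dominos there'' is not correct, since $Z_1$ is the brick wall of $\tau^{P_1}$, so its tiles cut by the outer boundary of $R^{\ell_k}_{\vec w}$ must also be completed inward, exactly as the block ${\bf b}$ is handled in the proof of Theorem~\ref{ufs}, and those completions too are accepted rather than dissected.
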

\begin{proof}
The case for $k=1$ is exactly Theorem~\ref{ufs}.  For $k>1$ we note
that the extra width in the collars provides enough space to
complete any necessary tiles of shape $\tau^{P_k}$ in $z[R_{\vec
w}]$, resulting in an inner rectangle whose boundaries have
dimensions of completed rectangles from $Z_k$, and therefore from
$Z_1$.  The remaining filling collar is now large enough so that
together with (\ref{kickedin}) the filling algorithm from the proof
of Theorem~\ref{ufs} can be used verbatim to fill the collar using
only tiles of type $\tau^1,\cdots,\tau^{n_1}$.
\end{proof}

There is also an analogous extension of Lemma~\ref{extend} which we
omit stating.

Choose an increasing sequence of integers satisfying:
\begin{align}
\frac{2d(\ell_k+2+n_{k-1})}{n_k}&<\frac1{4^k}\label{cntcollar}\\
\text{and }
\sum_{k=1}^\infty\frac{2d\ell_k}{n_k}&<\min_{j=1,\cdots,n_1}p_j\label{cntmsr}
\end{align}

We fix $z_i\in Z_i$, and choose $R_i,F_i$ and $B_i$ as before.  For
$i>1$ we divide $F_i$ into two subsets $F_i^{main}$ and $F_i^{tail}$
with the property that
\begin{equation}\label{tailmsr}
\mu(T^{R_{n_i}}(F_i^{\text{tail}}))=\sum_{k=n_i+1}^{n_{i+1}-1}p_k.
\end{equation}

\subsection{Constructing $\phi$}  We begin by constructing $\phi_1$ exactly as before.
We define $\phi_2$ as follows. On $F_2^{\text{main}}$ we proceed
exactly as in the finite case, using $z_1$ to obtain the ambient
word.  For $x\in F_2^{\text{tail}}$ we fix $z_2\in Z_2$ and we set
\begin{equation*}
\phi_2(T^{\vec v}x)=z_2[\vec v]
\end{equation*}
for $\vec v\in R_{n_2}^{-\ell_2+1}$.

To see all the necessary ingredients for the general induction step
we also need to describe the construction of $\phi_3$. Fix $z_3\in
Z_3$ and for $x\in F_3^{\text{tail}}$ we assign
\begin{equation*}
\phi_3(T^{\vec v}x)=z_3[\vec v]
\end{equation*}
for $\vec v\in R_{n_3}^{-\ell_3+1}$. For $x\in F_3^{\text{main}}$ we
proceed as in the finite case using $z_1$ to provide an ambient word
from $Z_1$, noting that any slices of previous stage towers that
appear in $T^{R_3}(x)$ come with sufficiently large filling collars
that can be filled either by Theorem~\ref{ufs} or by
Corollary~\ref{genfil}.  We note that the resulting shapes can be
interpolated into a word from $Z_1$ using the modified version of
Lemma~\ref{extend}.

To define $\phi:X\rightarrow Y_\infty$
we again begin by identifying points $x\in X$ for which $\lim \phi_i(x)$ does not exist.  In addition to points that lie in the error sets $B_i$ infinitely often or in the filling collars of infinitely many towers we must now also include points that land in $F_i^{\text{tail}}$ for infinitely many $i$.  For each $i$ this is a set of measure less than
\begin{equation*}
\epsilon_i+\frac{2d(\ell_i+2+n_{i-1})}{n_i}+\sum_{k=n_i}^{n_{i+1}-1}p_k,
\end{equation*}
which by (\ref{tailcontrol}) and (\ref{cntcollar}) is less than
$\frac{3}{8^i}$.  We can thus proceed to define the map $\phi$
exactly as in the finite case.

At the end of this construction we obtain a decomposition of $X$ into towers of shapes $\tau^1,\cdots,\tau^{n_1},\tau^{P_1},\tau^{P_2},\cdots$.
 As before, by (\ref{cntmsr}) none of the first set of small towers take up more than their allotted part of space.
In addition, we have by our choice of $\epsilon_i$,
(\ref{tailcontrol}), (\ref{cntgetall}), and (\ref{tailmsr}) that
$0<\mu(\tau^{P_i})<\sum_{j=n_i}^\infty p_j$.  So if we supplement
the measures of the towers $\tau^i$, or create such towers in the
case that $i>n_1$, in increasing order of index we are guaranteed
that we will decompose all of $\tau^{P_i}$ into required shape
towers for all $i\ge 1$, ending with only towers of the desired
shapes and measures.


\def\cprime{$'$}
\providecommand{\bysame}{\leavevmode\hbox to3em{\hrulefill}\thinspace}
\providecommand{\MR}{\relax\ifhmode\unskip\space\fi MR }
\providecommand{\MRhref}[2]{%
  \href{http://www.ams.org/mathscinet-getitem?mr=#1}{#2}
}
\providecommand{\href}[2]{#2}

\end{document}